\newtheorem{theorem}{Theorem}[subsection]
\newtheorem{corollary}[theorem]{Corollary}
\newtheorem{lemma}[theorem]{Lemma}
\newtheorem{definition}[theorem]{Definition}
\newtheorem{remark}[theorem]{Remark}
\newtheorem{example}[theorem]{Example}
\newtheorem*{thma}{Theorem A}
\newtheorem*{thmb}{Theorem B}
\def\calC{{\mathcal C}}
\def\End{\mathop{\rm End}\nolimits} 
\def\Ext{\mathop{\rm Ext}\nolimits} 
\def\H{\mathop{\rm H}\nolimits}
\def\E{\mathop{\rm E}\nolimits}
\def\Hom{\mathop{\rm Hom}\nolimits}
\def\lim{\mathop{\varinjlim}\nolimits}
\def\Ob{\mathop{\rm Ob}\nolimits} 
\def\Mor{\mathop{\rm Mor}\nolimits}
\def\Res{\mathop{\rm Res}\nolimits}
\def\cod{\mathop{\rm cod}}
\def\dom{\mathop{\rm dom}}
\DeclareMathOperator{\op}{\rm op}
\begin{document}

\title{Extension category algebras and LHS--spectral sequences}

\author{Mawei Wu}
\address{School of Mathematics and Statistics, Lingnan Normal University, Zhanjiang, Guangdong 524048, China}
\email{wumawei@lingnan.edu.cn}

\subjclass[2020]{16D10, 16D90, 18G40, 18A25, 18E10}

\keywords{extension category algebra, trivial extension algebra, skew category algebra, spectral sequence, Grothendieck construction, functor category, extension of categories}

\thanks{The author \begin{CJK*}{UTF8}{}
\CJKtilde \CJKfamily{gbsn}(吴马威)
\end{CJK*} is supported by Lingnan Normal University (No. 000302503182)}


\begin{abstract}
Let $\mathcal{C}$ be a small category, $\mathfrak{A}$ be a precosheaf of unital $k$-algebras on $\mathcal{C}$ and $\mathfrak{M}$ be an $\mathfrak{A}$-bimodule. We introduce two new notions, namely, the Grothendieck construction $Gr_{\mathcal{C}}(\mathfrak{A}, \mathfrak{M})$ of $\mathfrak{A}$ and $\mathfrak{M}$, as well as the extension category algebra $\mathfrak{A} \ltimes \mathfrak{M}$. The extension category algebra contains the trivial extension algebra and the skew category algebra as special cases. If $\mathcal{C}$ is object-finite, we prove that the category of modules of $Gr_{\mathcal{C}}(\mathfrak{A}, \mathfrak{M})$ is equivalent to the category of modules over $\mathfrak{A} \ltimes \mathfrak{M}$. Finally, we obtain two LHS-spectral sequences about $Gr_{\mathcal{C}}(\mathfrak{A}, \mathfrak{N})$ for a right $\mathfrak{A}$-module $\mathfrak{N}$.   
\end{abstract}

\maketitle

\tableofcontents

\section{Introduction}
The trivial extension algebra (see Definition \ref{triext}) has been extensively studied and used in various areas of research, such as representation theory, cohomology theory, category theory and homological algebra \cite{MY20,FGR06,Lof76,PR73,AI86,MP00,AHR84}, just to name a few. There are some generalizations of the trivial extension algebras, see \cite{Pog05,ABFS17,BBG20}.

The skew category algebra (see Defintion \ref{skewcatalg}) is an important class of algebras. It was used in \cite{WX23} to characterize the category of modules on ringed sites. A generalization of the skew category algebras was given in \cite[Definition 2.2.2]{Wu24}, so-called the pseudoskew category algebras. There are some other works about skew category algebras, see \cite{CMT24,Bav17,Bav20}.

In this paper, we introduce a new class of algebras, so-called the extension category algebras. The trivial extension algebras and the skew category algebras are some special examples of the extension category algebras. We give a characterization of the category of modules over the extension category algebras, and establish two LHS-spectral sequences.

To be more precise, let $\calC$ be a small category, $\mathfrak{A} \colon \calC \to k\mbox{\rm -Alg}$ be a precosheaf of unital $k$-algebras on $\calC$, and let $\mathfrak{M} \colon \calC \to k\mbox{\rm -Mod}$ be an $\mathfrak{A}$-bimodule (see Defintion \ref{biamod}), we first define the Grothendieck construction $Gr_{\mathcal{C}}(\mathfrak{A},\mathfrak{M})$ (see Definition \ref{bigrocon}) of $\mathfrak{A}$ and $\mathfrak{M}$, as well as the extension category algebra $\mathfrak{A} \ltimes \mathfrak{M}$ (see Definition \ref{extcatalg}) with respect to $\mathfrak{A}$ and $\mathfrak{M}$. Then we show that the module categories of $Gr_{\mathcal{C}}(\mathfrak{A},\mathfrak{M})$ and the category of modules over $\mathfrak{A} \ltimes \mathfrak{M}$ are equivalent.

\begin{thma} (Theorem \ref{main})
Let $\calC$ be a small category, $\mathfrak{A} \colon \calC \to k\mbox{\rm -Alg}$ be a precosheaf of unital $k$-algebras on $\calC$, and let $\mathfrak{M} \colon \calC \to k\mbox{\rm -Mod}$ be an $\mathfrak{A}$-bimodule. If $\Ob \mathcal{C}$ is finite, then there is a category equivalence
$$
Gr_{\mathcal{C}}(\mathfrak{A}, \mathfrak{M}) \mbox{\rm -Mod} \simeq \mathfrak{A} \ltimes \mathfrak{M} \mbox{\rm -Mod}.
$$     
\end{thma}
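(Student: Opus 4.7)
The plan is to realize the extension category algebra $\mathfrak{A} \ltimes \mathfrak{M}$ as the category algebra of the small $k$-linear category $Gr_{\mathcal{C}}(\mathfrak{A}, \mathfrak{M})$, and then invoke the classical Mitchell-type equivalence between additive functors out of a $k$-linear category with finitely many objects and modules over its associated (unital) category algebra. The finiteness of $\Ob \mathcal{C}$ is precisely what is needed to guarantee a unit in the category algebra, namely $\sum_{x \in \Ob \mathcal{C}} \mathrm{id}_x$, so that modules in the algebraic sense make sense at all.

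First I would unwind the two constructions. By the definitions of $Gr_{\mathcal{C}}(\mathfrak{A}, \mathfrak{M})$ and of $\mathfrak{A} \ltimes \mathfrak{M}$ (Definitions \ref{bigrocon} and \ref{extcatalg}), the morphism spaces of the Grothendieck construction from $x$ to $y$ should consist of data built from $\mathcal{C}(x,y)$ together with elements of $\mathfrak{A}(y)$ and $\mathfrak{M}(y)$ (the usual shape of a Grothendieck construction twisted by a bimodule). The extension category algebra is, by design, the direct sum of all these morphism spaces across $x, y \in \Ob \mathcal{C}$, equipped with a multiplication encoding composition. So the identification $\mathfrak{A} \ltimes \mathfrak{M} \cong k\bigl[Gr_{\mathcal{C}}(\mathfrak{A}, \mathfrak{M})\bigr]$ as unital $k$-algebras should be essentially tautological; the only real content is bookkeeping.

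Next I would apply the standard equivalence. Given a functor $F \colon Gr_{\mathcal{C}}(\mathfrak{A}, \mathfrak{M}) \to k\mbox{-Mod}$, set
\[
\Phi(F) \;=\; \bigoplus_{x \in \Ob \mathcal{C}} F(x),
\]
with $\mathfrak{A} \ltimes \mathfrak{M}$-action determined on generators by letting a morphism $f \colon x \to y$ in $Gr_{\mathcal{C}}(\mathfrak{A}, \mathfrak{M})$ act as $F(f)$ on the $x$-summand and by zero on the other summands. Conversely, given a module $M$ over $\mathfrak{A} \ltimes \mathfrak{M}$, use the orthogonal idempotents $e_x$ corresponding to the identities $\mathrm{id}_x$ (whose sum is the unit, by object-finiteness) to define $\Psi(M)(x) = e_x M$, with the evident action of morphisms. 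Verifying that $\Phi$ and $\Psi$ are quasi-inverse equivalences is routine once the algebra isomorphism from the previous step is in hand.

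The main obstacle is the identification in the second step: one must check that the multiplication in $\mathfrak{A} \ltimes \mathfrak{M}$, which mixes the cosheaf structure maps $\mathfrak{A}(\alpha) \colon \mathfrak{A}(x) \to \mathfrak{A}(y)$, the left and right bimodule actions of $\mathfrak{A}$ on $\mathfrak{M}$, and composition in $\mathcal{C}$, agrees on the nose with the composition rule of $Gr_{\mathcal{C}}(\mathfrak{A}, \mathfrak{M})$. In particular, the cross term coming from pairing an $\mathfrak{M}$-part with an $\mathfrak{A}$-part, and the vanishing of $\mathfrak{M}$-times-$\mathfrak{M}$ contributions (as in the trivial extension), need to be read off from the Grothendieck composition. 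Once this match is established, the finiteness of $\Ob \mathcal{C}$ reduces everything to the standard dictionary between presheaves on a small $k$-linear category and modules over its category algebra, completing the proof.
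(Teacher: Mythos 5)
Your proposal is correct and shares the paper's overall skeleton --- identify $\mathfrak{A}\ltimes\mathfrak{M}$ with the ($k$-linearized) category algebra of $Gr_{\mathcal{C}}(\mathfrak{A},\mathfrak{M})$, then use object-finiteness to trade functors for modules --- but it executes the second step by a genuinely different mechanism. The paper first applies Lemma \ref{linear} to pass to $k$-linear functors on the linearization $kGr_{\mathcal{C}}(\mathfrak{A},\mathfrak{M})$, then takes the small projective generator $\mathfrak{G}=\bigoplus_{x}\Hom_{kGr_{\mathcal{C}}(\mathfrak{A},\mathfrak{M})}(x,-)$, computes $\End(\mathfrak{G})\cong(\mathfrak{A}\ltimes\mathfrak{M})^{\op}$ via the Yoneda lemma and the explicit map $(r,m,f)\mapsto rmf$ (this is exactly where your ``tautological bookkeeping'' lives, and indeed the match of Definition \ref{bigrocon}(3) with Definition \ref{extcatalg} is on the nose by construction, cross terms and absent $\mathfrak{M}$-times-$\mathfrak{M}$ term included), and finally invokes Freyd's recognition theorem \cite[Exercise F on p.106]{Fre64} that an abelian category with a small projective generator is equivalent to modules over its endomorphism ring. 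You instead write down the quasi-inverse pair $\Phi(F)=\bigoplus_{x}F(x)$ and $\Psi(M)(x)=e_{x}M$ directly, using the orthogonal idempotents $e_x$ coming from the identities $(1_{\mathfrak{A}(x)},0_{\mathfrak{M}(x)},1_x)$, whose sum is the unit precisely because $\Ob\mathcal{C}$ is finite. Your route is more elementary and explicit: it avoids Freyd's theorem entirely, and also sidesteps the $\op$-twist that enters the paper's computation (endomorphisms of a sum of covariant Hom functors naturally form the opposite algebra), at the cost of verifying the module axioms and the quasi-inverse identities by hand; the paper's route buys a shorter verification by letting Yoneda and Freyd do that work.

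One imprecision you should repair, though it does not break the argument: $Gr_{\mathcal{C}}(\mathfrak{A},\mathfrak{M})$ is \emph{not} a $k$-linear category --- the paper explicitly remarks that its hom-sets carry no natural $k$-module structure --- so the statement you want is not ``additive functors out of a $k$-linear category.'' The module category here is all functors $Gr_{\mathcal{C}}(\mathfrak{A},\mathfrak{M})\to k\mbox{\rm -Mod}$, and the Mitchell-type dictionary must be applied to the linearization $kGr_{\mathcal{C}}(\mathfrak{A},\mathfrak{M})$ (this is exactly the role of Lemma \ref{linear}), whose hom-modules are free on the triples $(r,m,f)$; with the paper's reading of $\mathfrak{A}\ltimes\mathfrak{M}$ as spanned freely by the symbols $rmf$, your identification of the algebra with the linearized category algebra then holds verbatim, and the rest of your construction goes through as stated.
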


Given a functor $\mathfrak{F} \colon \mathcal{C} \to \mbox{\rm Cat}$, the authors in \cite{PJ06} construct a spectral sequence abutting to the Baues-Wirsching cohomology of the Grothendieck construction of $\mathfrak{F}$, in
terms of the cohomology of $\mathcal{C}$ and of $\mathfrak{F}(x)$ for $x \in \Ob \mathcal{C}$. The authors in \cite{KY25} also construct a similar spectral sequence that converges to the Thomason cohomology of the Grothendieck construction. In this paper, we try to establish some similar spectral sequences for the Grothendieck construction $Gr_{\mathcal{C}}(\mathfrak{A},\mathfrak{M})$. Let $\mathfrak{N}$ be a right $\mathfrak{A}$-module (see Definition \ref{ramod}), by using some results of Xu \cite{Xu08} and Yal{\c{c}}{\i}n \cite{Yal24}, we establish two LHS-spectral sequences about the Grothendieck construction $Gr_{\mathcal{C}}(\mathfrak{A},\mathfrak{N})$ (see Definition \ref{rgrocon}) of $\mathfrak{A}$ and $\mathfrak{N}$. 

In the following theorem, $\mathfrak{N}_{\sqcup}:=\bigsqcup_{x \in \Ob \calC} \mathfrak{N}(x)$ is a disjoint union of the underlying abelian groups (viewed as categories \cite{Mit72}) of $k$-modules $\mathfrak{N}(x)$. The category $Gr_{\mathcal{C}}(\mathfrak{A})$ is the Grothendieck construction of $\mathfrak{A}$ (see Definition \ref{algrocon}) and $\mathbb{H}^*(\mathfrak{N}_{\sqcup}; \mathfrak{F})$ is a right $Gr_{\mathcal{C}}(\mathfrak{A})$-module (see Definition \ref{hfun}). The functor $\pi$ is defined as in Definition \ref{exofcat} and $\Res_{\pi}$ is the restriction functor along $\pi$. For the definitions of Ext-groups and cohomology group of $\mathcal{C}$, one can see Definition \ref{extdef} and Definition \ref{cohdef} respectively.

\begin{thmb} (Theorem \ref{lhs1} and Theorem \ref{lhs2})
Let $\mathfrak{F} \in \mbox{\rm Mod-} Gr_{\mathcal{C}}(\mathfrak{A}, \mathfrak{N})$ and $\mathfrak{G} \in \mbox{\rm Mod-} Gr_{\mathcal{C}}(\mathfrak{A})$, then there are two spectral sequences
    $$
    \E_2^{p,q}=\Ext^p_{Gr_{\mathcal{C}}(\mathfrak{A})}(\mathfrak{G}, \mathbb{H}^q(\mathfrak{N}_{\sqcup}; \mathfrak{F})) \Longrightarrow \Ext^{p+q}_{Gr_{\mathcal{C}}(\mathfrak{A}, \mathfrak{N})}(\Res_{\pi} \mathfrak{G}, \mathfrak{F}),
    $$
and
    $$
    \E_2^{p,q}=\H^p(Gr_{\mathcal{C}}(\mathfrak{A}); \mathbb{H}^q(\mathfrak{N}_{\sqcup}; \mathfrak{F})) \Longrightarrow \H^{p+q}(Gr_{\mathcal{C}}(\mathfrak{A}, \mathfrak{N}); \mathfrak{F}).
    $$    
\end{thmb}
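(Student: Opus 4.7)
The plan is to recognize that $\pi \colon Gr_{\mathcal{C}}(\mathfrak{A}, \mathfrak{N}) \to Gr_{\mathcal{C}}(\mathfrak{A})$, together with the fiberwise inclusion of $\mathfrak{N}_{\sqcup}$, forms an extension of (linear) categories in the sense of Definition \ref{exofcat}, and then to invoke the LHS-type spectral sequence for such extensions established by Xu \cite{Xu08} and Yal\c{c}{\i}n \cite{Yal24}. First I would unpack the definitions of $Gr_{\mathcal{C}}(\mathfrak{A})$ and $Gr_{\mathcal{C}}(\mathfrak{A}, \mathfrak{N})$ to present each hom-module $\Hom_{Gr_{\mathcal{C}}(\mathfrak{A},\mathfrak{N})}(x,y)$ as $\Hom_{Gr_{\mathcal{C}}(\mathfrak{A})}(x,y)$ together with extra pieces coming from $\mathfrak{N}$, check that $\pi$ is the projection onto the first summand and is bijective on objects, verify that $\mathfrak{N}_{\sqcup}$ embeds as the kernel on each endomorphism monoid, and confirm that the $\mathfrak{A}$-bimodule axioms for $\mathfrak{N}$ are precisely what makes $\mathbb{H}^q(\mathfrak{N}_{\sqcup}; \mathfrak{F})$ a right $Gr_{\mathcal{C}}(\mathfrak{A})$-module in the sense of Definition \ref{hfun}.

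Second, once the triple $(\mathfrak{N}_{\sqcup}, Gr_{\mathcal{C}}(\mathfrak{A}, \mathfrak{N}), Gr_{\mathcal{C}}(\mathfrak{A}))$ is set up as an extension of categories, the first spectral sequence is the Grothendieck spectral sequence attached to the composition of functors
\begin{equation*}
\Hom_{Gr_{\mathcal{C}}(\mathfrak{A}, \mathfrak{N})}(\Res_\pi(-), \mathfrak{F}) \;\cong\; \Hom_{Gr_{\mathcal{C}}(\mathfrak{A})}(-, \pi_* \mathfrak{F}),
\end{equation*}
where $\pi_*$ is the right adjoint of $\Res_\pi$. The derived functors $R^q \pi_* \mathfrak{F}$ must then be identified with $\mathbb{H}^q(\mathfrak{N}_{\sqcup}; \mathfrak{F})$: by Xu's computation of Ext-groups over module categories of small categories, the fiberwise comma-category cohomology that computes $R^q \pi_*$ collapses in this situation to the cohomology of the fiber $\mathfrak{N}_{\sqcup}$ with the restricted coefficients, and assembling these pieces over the base recovers the right $Gr_{\mathcal{C}}(\mathfrak{A})$-module of Definition \ref{hfun}. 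Once the adjunction is exact on injectives in the first variable, the $E_2$-page and abutment of Theorem \ref{lhs1} read off.

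Third, the second spectral sequence is the constant-coefficient specialization of the first: take $\mathfrak{G}$ to be the constant right module $\k$ on $Gr_{\mathcal{C}}(\mathfrak{A})$, observe that $\Res_\pi \k = \k$ on $Gr_{\mathcal{C}}(\mathfrak{A}, \mathfrak{N})$, and substitute into the first spectral sequence. Using the defining identities $\Ext^*_{Gr_{\mathcal{C}}(\mathfrak{A})}(\k, -) = \H^*(Gr_{\mathcal{C}}(\mathfrak{A}); -)$ and $\Ext^*_{Gr_{\mathcal{C}}(\mathfrak{A},\mathfrak{N})}(\k, -) = \H^*(Gr_{\mathcal{C}}(\mathfrak{A},\mathfrak{N}); -)$ from Definition \ref{cohdef} gives Theorem \ref{lhs2} as an immediate corollary.

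The main obstacle will be the first step: writing down the extension-of-categories structure on $\pi$ in a way that literally matches the Xu--Yal\c{c}{\i}n framework, together with the identification $R^q \pi_* \mathfrak{F} \cong \mathbb{H}^q(\mathfrak{N}_{\sqcup}; \mathfrak{F})$. The subtlety is that $\mathfrak{N}_{\sqcup}$ is not a single category sitting over all of $Gr_{\mathcal{C}}(\mathfrak{A})$ but a disjoint union of pieces indexed by objects, so some bookkeeping is needed to assemble the fiberwise kernels into a coherent ``kernel'' object to which the LHS machinery applies. Once this is dispensed with, both spectral sequences follow by a formal application of the Lyndon--Hochschild--Serre machinery for extensions of small categories.
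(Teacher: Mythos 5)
Your proposal follows essentially the same route as the paper: it sets up the triple $\mathfrak{N}_{\sqcup} \to Gr_{\mathcal{C}}(\mathfrak{A}, \mathfrak{N}) \to Gr_{\mathcal{C}}(\mathfrak{A})$ as an extension of categories (the paper's Lemma \ref{exofcat}), applies Yal\c{c}{\i}n's LHS-type spectral sequence \cite[Theorem 7.10]{Yal24} (whose internal Grothendieck-spectral-sequence mechanism via the adjunction $\Res_\pi \dashv \pi_*$ you sketch, though the paper simply cites it), and obtains the second sequence by specializing $\mathfrak{G} = \underline{k}$ exactly as in Theorem \ref{lhs2}. One small terminological slip: you refer to ``the $\mathfrak{A}$-bimodule axioms for $\mathfrak{N}$,'' but $\mathfrak{N}$ is only a \emph{right} $\mathfrak{A}$-module, and this is essential --- the paper notes that for a bimodule $\mathfrak{M}$ the analogous sequence fails condition (2) of Definition \ref{extension}, which is precisely why Definition \ref{rgrocon} drops the left action.
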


This paper is organized as follows. In Section \ref{prelim}, some basic definitions are recalled, including the trivial extension algebras and the skew category algebras, the modules over precosheaves of algebras, the representations and cohomology of categories, and the extensions of categories. In Section \ref{cha}, two new notions are introduced, that is, the Grothendieck construction $Gr_{\mathcal{C}}(\mathfrak{A,\mathfrak{M}})$ and the extension category algebra $\mathfrak{A} \ltimes \mathfrak{M}$. Then we prove that the category of modules of $Gr_{\mathcal{C}}(\mathfrak{A,\mathfrak{M}})$ and the category of modules over $\mathfrak{A} \ltimes \mathfrak{M}$ are equivalent. Finally, we use this main characterization theorem to reprove \cite[Theorem A]{WX23}. In Section \ref{lhs}, two LHS-spectral sequences of $Gr_{\mathcal{C}}(\mathfrak{A},\mathfrak{N})$ are established for a right $\mathfrak{A}$-module $\mathfrak{N}$.

\noindent\textbf{Conventions and Notations:}
\begin{enumerate}
    \item Throughout $k$ is a unital commutative ring.
    \item All algebras and their morphisms are unital, $k\mbox{\rm -Alg}$ is the category of unital algebras and unital algebra homomorphisms.
    \item $\mathfrak{A} [\mathcal{C}]$ (resp. $\mathfrak{A} \ltimes \mathfrak{M}$) is a skew category algebra (resp. extension category algebra).
    \item $\Ob \mathcal{C}$ (resp.  $\Mor \mathcal{C}$) is the set of all objects (resp. morphisms) of $\mathcal{C}$.
    \item For a morphism $f \colon x \to y$, its domain (resp. codomain) is denoted by $\dom(f)$ (resp. $\cod(f)$). 
    \item The composition of maps $x \overset{f}{\longrightarrow} y \overset{g}{\longrightarrow} z$ is written as $fg$. (from left to right!)
    \item $1_{\mathfrak{A}(x)}$ is the identity element of the $k$-algebra $\mathfrak{A}(x)$, $0_{\mathfrak{M}(x)}$ is the zero element of the $k$-module $\mathfrak{M}(x)$, $1_x$ is the identity morphism of $x \in \Ob \mathcal{C}$. 
    \item $k\mathcal{C}$ is the $k$-linearization of the category $\mathcal{C}$.
    \item $\mbox{\rm Fun}(\mathcal{C}, \mathcal{D})$ is the category of functors from $\mathcal{C}$ to $\mathcal{D}$.
    \item $\mbox{\rm Mod-} \mathcal{C}$ (resp. $\mathcal{C} \mbox{\rm -Mod}$) is the category of right (resp. left) $\mathcal{C}$-modules
    \item $\underline{C}$ is the constant functor with value at $C$.
    \item $Gr_{\mathcal{C}}(\mathfrak{A})$ is the Grothendieck construction of the functor $\mathfrak{A}$.
    \item The action is denoted by $\cdot$.
\end{enumerate}

\section{Preliminaries} \label{prelim}
In this section, some basic notions are recalled. To be more precise, we recall the following definitions: trivial extension algebras and skew category algebras, the (bi/right) modules over precosheaves of algebras, the representations and cohomology of categories, and the extensions of categories. 

\subsection{Trivial extension algebras and skew category algebras}
For the convenience of the reader, the definitions of the trivial extension algebras and the skew category algebras are recorded.

\begin{definition} \cite[p.78]{ARS95} \label{triext}
Let $\Lambda$ be a unital $k$-algebra and $M$ be a $\Lambda$-bimodule. The
$k$-linear space $\Lambda \oplus M$ equipped with the multiplication given by
$$
(a_1,m_1) (a_2,m_2) = (a_1a_2, a_1m_2+m_1a_2)
$$
for all $a_1, a_2 \in \Lambda$ and $m_1, m_2 \in M$, form a $k$-algebra. Such a $k$-algebra is called a \emph{trivial extension algebra of $\Lambda$ by
$M$} and it is denoted by $\Lambda \ltimes M$.
\end{definition}

The co-variant version of the skew category algebras is given as follows.

\begin{definition} (co-variant version of \cite[Definition 3.2.1]{WX23}) \label{skewcatalg}
	Let $\calC$ be a (non-empty) small category. Let $\mathfrak{A} \colon \calC \to k\text{-}{\rm Alg}$ be a precosheaf of $k$-algebras. The \emph{skew category algebra} $\mathfrak{A}[\calC]$ on $\calC$ with respect to $\mathfrak{A}$ is a $k$-module spanned over elements of the form $rf$, where $f \in \Mor \calC$ and $r \in \mathfrak{A}(\cod(f))$. We define the multiplication on two base elements by the rule
	\begin{eqnarray}
		  sg \ast rf =
		\begin{cases}
			(\mathfrak{A}(g)(r)s) fg,       & \text{if}\ \dom(g)=\cod(f); \notag \\
			0, & {\rm otherwise}.
		\end{cases}
	\end{eqnarray} 
Extending this product linearly to two arbitrary elements, $\mathfrak{A}[\calC]$ becomes an associative $k$-algebra.
\end{definition}

\subsection{Modules over precosheaves of algebras}
Let $\mathfrak{A} \colon \calC \to k\mbox{\rm -Alg}$ be a precosheaf of unital $k$-algebras, the definitions of $\mathfrak{A}$-bimodules and right $\mathfrak{A}$-modules are recalled in this subsection.

\begin{definition} \label{biamod}
Let $\calC$ be a small category, $\mathfrak{A} \colon \calC \to k\mbox{\rm -Alg}$ be a precosheaf of unital $k$-algebras on $\calC$, then $\mathfrak{M} \colon \calC \to k\mbox{\rm -Mod}$ is called an \emph{$\mathfrak{A}$-bimodule} if it also satisfies the following conditions:
\begin{enumerate}
    \item for each $x \in \Ob \mathcal{C}$, $\mathfrak{M}(x)$ is an $\mathcal{A}(x)$-bimodule,
    \item for each $f \colon x \to y$ and $r, s \in \mathfrak{A}(x)$, $m \in \mathfrak{M}(x)$, 
    $$
    \mathfrak{M}(f)(r \cdot_l m)=\mathfrak{A}(f)(r) \cdot_l \mathfrak{M}(f)(m),
    $$
    and
    $$
    \mathfrak{M}(f)(m \cdot_r s)= \mathfrak{M}(f)(m) \cdot_r \mathfrak{A}(f)(s),
    $$
    where $\cdot_l$ (resp. $\cdot_r$) is the left (resp. right) $\mathfrak{A}(x)$-action on $\mathfrak{M}(x)$. For simplicity, the subscripts will be omitted later.
\end{enumerate}
\end{definition}

Similarly, forgetting the left $\mathfrak{A}$-action, we have the definition of right $\mathfrak{A}$-modules. 

\begin{definition} \label{ramod}
Let $\calC$ be a small category, $\mathfrak{A} \colon \calC \to k\mbox{\rm -Alg}$ be a precosheaf of unital $k$-algebras on $\calC$, then $\mathfrak{N} \colon \calC \to k\mbox{\rm -Mod}$ is called a \emph{right $\mathfrak{A}$-module} if it also satisfies the following conditions:
\begin{enumerate}
    \item for each $x \in \Ob \mathcal{C}$, $\mathfrak{N}(x)$ is a right $\mathcal{A}(x)$-module,
    \item for each $f \colon x \to y$ and $s \in \mathfrak{A}(x)$, $m \in \mathfrak{N}(x)$, 
    $$
    \mathfrak{N}(f)(m \cdot_r s)= \mathfrak{N}(f)(m) \cdot_r \mathfrak{A}(f)(s).
    $$
\end{enumerate}   
\end{definition}

\subsection{Representations and cohomology of categories}
The definitions of the modules and cohomology of a category $\mathcal{C}$, as well as the Ext-groups of two modules of $\mathcal{C}$ are given in this subsection. For more information, see \cite{Web07,Yal24,Xu08}. 

\begin{definition}
Let $k$ be a commutative ring with unity. A contra-variant functor $\mathfrak{F} \colon \mathcal{C}^{\op} \to k\mbox{\rm -Mod}$ is called a \emph{right $\mathcal{C}$-module}. A co-variant functor $\mathfrak{F} \colon \mathcal{C} \to k\mbox{\rm -Mod}$ is called a \emph{left $\mathcal{C}$-module}.    
\end{definition}

All the right (resp. left) $\mathcal{C}$-modules and the natural morphisms between them form a functor category, we denote it by $\mbox{\rm Mod-} \mathcal{C} := \mbox{\rm Fun}(\mathcal{C}^{\op}, k\mbox{-Mod})$ (resp. $\mathcal{C} \mbox{\rm -Mod} := \mbox{\rm Fun}(\mathcal{C}, k\mbox{-Mod})$).

For two $\mathcal{C}$-modules $\mathfrak{F}$ and $\mathfrak{G}$, their Ext-group is defined as follows.

\begin{definition} \label{extdef}
Let $\mathfrak{F}, \mathfrak{G}$ be two $\mathcal{C}$-modules. For every $n \geq 0$, \emph{the $\Ext$-group of $\mathfrak{G}$ and
$\mathfrak{F}$} is defined by
$$
\Ext^n_{\mathcal{C}}(\mathfrak{G},\mathfrak{F}):=[R^n\Hom_{\mathcal{C}}(\mathfrak{G},-)](\mathfrak{F}),
$$
where $R^n\Hom_{\mathcal{C}}(\mathfrak{G},-)$ is the $n$-th right derived functor of $\Hom_{\mathcal{C}}(\mathfrak{G},-)$.
\end{definition}

Let $\underline{k}$ be a constant functor, then the cohomolgy group of $\mathcal{C}$ is defined as follows.

\begin{definition} \label{cohdef}
For every $\mathcal{C}$-module $\mathfrak{F}$, for $n \geq 0$, \emph{the $n$-th cohomology group of $\mathcal{C}$ with coefficients in $\mathfrak{F}$} is defined by
$$
\H^n(\mathcal{C}; \mathfrak{F}) := \Ext^n_{\mathcal{C}}(\underline{k}, \mathfrak{F}).
$$
\end{definition}

\subsection{Extensions of categories}
There are some different versions of the extensions of categories, see \cite{Web07,Hof94,BW84}. The extensions of categories considered in the present paper are in the sense of Hoff.

\begin{definition} \cite[D{\'e}finition 1.1]{Hof94} \label{extension}
An \emph{extension $\mathcal{E}$ of a category $\mathcal{C}$ via a category $\mathcal{K}$} is a sequence of functors
$$
\mathcal{K} \overset{\iota}{\longrightarrow} \mathcal{E} \overset{\pi}{\longrightarrow} \mathcal{C},
$$
which has the following properties: 
\begin{enumerate}
    \item $\Ob \mathcal{K} = \Ob \mathcal{E} = \Ob \mathcal{C}$, the functor $\iota$ is injective and $\pi$ is surjective on morphisms, both $\iota$ and $\pi$ are identities on objects;
    \item for two morphisms $f, g \in \Mor \mathcal{E}$, $\pi(f) = \pi(g)$ if and only if there is a unique $h \in \Mor \mathcal{K}$ such that $f \circ \iota(h) = g$.
\end{enumerate}
\end{definition}

\section{Extension category algebras and their modules} \label{cha}
Let $\mathfrak{A} \colon \calC \to k\mbox{\rm -Alg}$ be a precosheaf of unital $k$-algebras and $\mathfrak{M} \colon \calC \to k\mbox{\rm -Mod}$ be an $\mathfrak{A}$-bimodule. In this section, we first introduce two new notions: the Grothendieck construction $Gr_{\mathcal{C}}(\mathfrak{A,\mathfrak{M}})$ of $\mathfrak{A}$ and $\mathfrak{M}$, as well as the extension category algebra $\mathfrak{A} \ltimes \mathfrak{M}$ with respect to $\mathfrak{A}$ and $\mathfrak{M}$. Then we prove that the category of modules of $Gr_{\mathcal{C}}(\mathfrak{A,\mathfrak{M}})$ and the category of modules over $\mathfrak{A} \ltimes \mathfrak{M}$ are equivalent. Finally, we use this characterization theorem to reprove \cite[Theorem A]{WX23}.

\subsection{Grothendieck constructions}
Given a precosheaf of unital $k$-algebras $\mathfrak{A}$ and  an $\mathfrak{A}$-bimodule $\mathfrak{M}$, we define a category as follows. Roughly speaking, it put all the information of $\mathfrak{A}$ and $\mathfrak{M}$ together.

\begin{definition} \label{bigrocon}
Let $\calC$ be a small category, $\mathfrak{A} \colon \calC \to k\mbox{\rm -Alg}$ be a precosheaf of unital $k$-algebras on $\calC$, and $\mathfrak{M} \colon \calC \to k\mbox{\rm -Mod}$ be an $\mathfrak{A}$-bimodule. Then a category $Gr_{\mathcal{C}}(\mathfrak{A}, \mathfrak{M})$, called \emph{the Grothendieck construction of $\mathfrak{A}$ and $\mathfrak{M}$}, is defined as follows:
\begin{enumerate}
    \item $\Ob Gr_{\mathcal{C}}(\mathfrak{A}, \mathfrak{M}):= \Ob \calC$;
    \item for each $x, y \in \Ob Gr_{\mathcal{C}}(\mathfrak{A}, \mathfrak{M})$,
    $$
    \Hom_{Gr_{\mathcal{C}}(\mathfrak{A}, \mathfrak{M})}(x, y) := \left\{ (r,m,f) ~|~ f \in \Hom_{\mathcal{C}}(x,y), r \in \mathfrak{A}(y), m \in \mathfrak{M}(y) \right\},
    $$
    \item for each $x, y, z \in \Ob Gr_{\mathcal{C}}(\mathfrak{A}, \mathfrak{M})$, and each $(r,m,f) \in \Hom_{Gr_{\mathcal{C}}(\mathfrak{A}, \mathfrak{M})}(x, y)$, $(s,n,g) \in \Hom_{Gr_{\mathcal{C}}(\mathfrak{A}, \mathfrak{M})}(y, z)$, we set
    $$
    (r,m,f) \circ (s,n,g) : = \left( \mathfrak{A}(g)(r)s, \mathfrak{A}(g)(r) \cdot n + \mathfrak{M}(g)(m) \cdot s, fg \right).
    $$
\end{enumerate}
\end{definition}

\begin{remark}
\begin{enumerate}
    \item It seems weird that one takes a Grothendieck construction for ``two'' functors. In some sense, the construction in the definition above seems like combining the Grothendieck construction of $\mathfrak{A}$ with the  Grothendieck construction of $\mathfrak{M}$ suitably, hence the name ``Grothendieck construction of $\mathfrak{A}$ and $\mathfrak{M}$'' is taken.
    \item The category $Gr_{\mathcal{C}}(\mathfrak{A}, \mathfrak{M})$ is non-additive, since the Hom set $\Hom_{Gr_{\mathcal{C}}(\mathfrak{A}, \mathfrak{M})}(x, y)$ has no natural abelian group structure.
\end{enumerate}
\end{remark}

\subsection{Extension category algebras}
Motivated by the definitions of the trivial extension algebras and the skew category algebras, we introduce a new class of algebras.

\begin{definition} \label{extcatalg}
Let $\calC$ be a (non-empty) small category. Let $\mathfrak{A} \colon \calC \to k\text{-}{\rm Alg}$ be a precosheaf of $k$-algebras and $\mathfrak{M} \colon \calC \to k\mbox{\rm -Mod}$ be an $\mathfrak{A}$-bimodule. The \emph{extension category algebra} $\mathfrak{A} \ltimes \mathfrak{M}$ on $\calC$ with respect to $\mathfrak{A}$ and $\mathfrak{M}$ is a $k$-module spanned over elements of the form $\left\{~rmf ~|~ f \in \Hom_{\calC}(x, y), r \in \mathfrak{A}(y), m \in \mathfrak{M}(y)~ \right\}$. We define the multiplication on two base elements by the rule
	\begin{eqnarray}
		sng \ast rmf =
		\begin{cases}
			t w h ,       & \text{if}\ \dom(g)=\cod(f); \notag \\
			0, & {\rm otherwise},
		\end{cases}
	\end{eqnarray}
where $t=\mathfrak{A}(g)(r)s$, $w=\mathfrak{A}(g)(r) \cdot n + \mathfrak{M}(g)(m) \cdot s$, $h=fg$. 
Extending this product linearly to two arbitrary elements, $\mathfrak{A} \ltimes \mathfrak{M}$ becomes an associative $k$-algebra.
\end{definition}

\begin{example} \label{ex}
\begin{enumerate}
    \item If $\mathcal{C}$ is a trivial category with single object $\bullet$, then $\mathfrak{A} \ltimes \mathfrak{M}$ is a trivial extension algebra $\mathfrak{A(\bullet)} \ltimes \mathfrak{M}(\bullet)$ (see Definition \ref{triext}).
    \item If $\mathfrak{M}=\underline{0}$, then $\mathfrak{A} \ltimes \mathfrak{M}$ is a skew category algebra $\mathfrak{A}[\mathcal{C}]$ (see Definition \ref{skewcatalg}).
\end{enumerate}    
\end{example}

\begin{remark}
\begin{enumerate}
    \item The algebra $\mathfrak{A} \ltimes \mathfrak{M}$ has an identity $\sum_{x \in \Ob \mathcal{C}} 1_{\mathfrak{A}(x)}0_{\mathfrak{M}(x)}1_x$ if $\mathcal{C}$ is object-finite (i.e. $\Ob \mathcal{C} < + \infty$).
    \item Due to the Example \ref{ex} (1) above, the extension category algebras $\mathfrak{A} \ltimes \mathfrak{M}$ can be viewed as ``trivial extension algebras with several objects''.
\end{enumerate}     
\end{remark}

\subsection{A characterization of modules over extension category algebras}
In this subsection, a characterization of the category of modules over an extension category algebra is given. Before that, we first recall the following well-known lemma.

\begin{lemma} \label{linear}
Let $k\mathcal{C}$ be a $k$-linearization category of $\mathcal{C}$, $\mbox{\rm Fun}_k(k\mathcal{C}, k\mbox{\rm -Mod})$ be the category of $k$-linear functors from $k\mathcal{C}$ to $k\mbox{\rm -Mod}$, then we have the following category equivalence
$$
\mbox{\rm Fun}(\mathcal{C}, k\mbox{\rm -Mod}) \simeq \mbox{\rm Fun}_k(k\mathcal{C}, k\mbox{\rm -Mod}).
$$    
\end{lemma}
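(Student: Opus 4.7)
The plan is to prove this via the universal property of the $k$-linearization. Recall that $k\calC$ has the same objects as $\calC$, its Hom $k$-modules are the free $k$-modules $k\calC(x,y) := k[\Hom_{\calC}(x,y)]$ on the original Hom sets, composition is extended $k$-bilinearly, and there is a canonical (non-$k$-linear) embedding $\iota \colon \calC \hookrightarrow k\calC$ that is the identity on objects and sends each morphism to the corresponding basis vector. The strategy is to exhibit an explicit pair of functors and check they are mutually quasi-inverse; no deep input is needed.

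First I would define the restriction functor
$$
\Phi \colon \mbox{\rm Fun}_k(k\calC, k\mbox{\rm -Mod}) \longrightarrow \mbox{\rm Fun}(\calC, k\mbox{\rm -Mod}), \qquad \Phi(\mathfrak{F}) := \mathfrak{F} \circ \iota,
$$
and on natural transformations $\alpha \colon \mathfrak{F} \to \mathfrak{G}$ set $\Phi(\alpha)_x := \alpha_x$ for every $x \in \Ob \calC$; this is automatically natural with respect to morphisms of $\calC$ since $\alpha$ was natural with respect to all morphisms of $k\calC$. Next I would construct the $k$-linear extension functor
$$
\Psi \colon \mbox{\rm Fun}(\calC, k\mbox{\rm -Mod}) \longrightarrow \mbox{\rm Fun}_k(k\calC, k\mbox{\rm -Mod}),
$$
defined on objects by $\Psi(\mathfrak{F})(x) := \mathfrak{F}(x)$ and on a general morphism $\sum_i a_i f_i \in k\calC(x,y)$ by $\Psi(\mathfrak{F})(\sum_i a_i f_i) := \sum_i a_i \mathfrak{F}(f_i)$. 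Functoriality ($k$-linearity, preservation of identities, compatibility with composition) follows immediately from the freeness of $k\calC(x,y)$ together with bilinearity of composition. On a natural transformation $\beta \colon \mathfrak{F} \to \mathfrak{G}$ define $\Psi(\beta)_x := \beta_x$; naturality on a basis morphism $f$ is the given naturality of $\beta$, and $k$-bilinearity upgrades this to naturality with respect to arbitrary morphisms in $k\calC$.

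It then remains to check that $\Phi \circ \Psi = \Id$ and $\Psi \circ \Phi \cong \Id$. The first equality is immediate from the definitions, since restricting a $k$-linear extension to basis morphisms recovers the original functor verbatim. For the second, given $\mathfrak{F} \in \mbox{\rm Fun}_k(k\calC, k\mbox{\rm -Mod})$, the functor $\Psi(\Phi(\mathfrak{F}))$ agrees with $\mathfrak{F}$ on objects and on basis morphisms of $k\calC$; since both are $k$-linear on each Hom module, they coincide on arbitrary morphisms, and the identity natural transformation is the required isomorphism. Analogous remarks handle natural transformations.

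The argument is essentially bookkeeping and I do not anticipate a genuine obstacle; the only point requiring a line of care is verifying that $\Psi$ sends functors and natural transformations to genuinely $k$-linear data, which reduces to the observation that the defining relation $\Psi(\mathfrak{F})(\sum_i a_i f_i) = \sum_i a_i \mathfrak{F}(f_i)$ is forced (and well-defined) precisely because the Hom modules of $k\calC$ are free on $\Hom_{\calC}(x,y)$.
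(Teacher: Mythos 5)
The paper records this lemma as well known and offers no proof of its own, so there is nothing to diverge from: your argument is exactly the standard one that underlies the citation, and it is correct --- restriction along the canonical embedding $\iota \colon \mathcal{C} \to k\mathcal{C}$ and $k$-linear extension via freeness of the Hom modules $k[\Hom_{\mathcal{C}}(x,y)]$ are mutually inverse on the nose. If anything, you prove slightly more than stated, namely a strict isomorphism of categories rather than a mere equivalence, and your closing remark correctly isolates the one point of substance (well-definedness of the linear extension rests on freeness).
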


Now, it is time to give our first main result in this paper.

\begin{theorem} \label{main}
Let $\calC$ be a small category, $\mathfrak{A} \colon \calC \to k\mbox{\rm -Alg}$ be a precosheaf of unital $k$-algebras on $\calC$, and let $\mathfrak{M} \colon \calC \to k\mbox{\rm -Mod}$ be an $\mathfrak{A}$-bimodule. If $\Ob \mathcal{C}$ is finite, then there is a category equivalence
$$
Gr_{\mathcal{C}}(\mathfrak{A}, \mathfrak{M}) \mbox{\rm -Mod} \simeq \mathfrak{A} \ltimes \mathfrak{M} \mbox{\rm -Mod}.
$$    
\end{theorem}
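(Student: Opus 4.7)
The plan is to exhibit quasi-inverse functors $\Phi : \mathfrak{A} \ltimes \mathfrak{M}\mbox{-Mod} \to Gr_{\mathcal{C}}(\mathfrak{A}, \mathfrak{M})\mbox{-Mod}$ and $\Psi$ going in the reverse direction, leveraging the fact that $|\Ob \mathcal{C}| < \infty$ makes $\mathfrak{A} \ltimes \mathfrak{M}$ unital with identity $\sum_x e_x$, where $e_x := 1_{\mathfrak{A}(x)} 0_{\mathfrak{M}(x)} 1_x$ forms a complete system of orthogonal idempotents indexed by $\Ob \mathcal{C}$. The orthogonality $e_y \ast e_x = \delta_{xy} e_x$ is immediate from the composability clause of Definition \ref{extcatalg}, since the identity morphisms $1_x$ and $1_y$ are composable only when $x = y$.

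For $\Phi$, given a left $\mathfrak{A} \ltimes \mathfrak{M}$-module $V$, the idempotent decomposition yields $V = \bigoplus_x e_x V$. I would set $\Phi(V)(x) := e_x V$ and send a morphism $(r, m, f) : x \to y$ of $Gr_{\mathcal{C}}(\mathfrak{A}, \mathfrak{M})$ to left multiplication by $rmf$, which restricts to a map $e_x V \to e_y V$ because $e_y \ast rmf \ast e_x = rmf$. Identity preservation is built into the definition of $e_x$, and the crucial observation is that the composition formula in Definition \ref{bigrocon} was chosen to coincide \emph{on the nose} with the multiplication rule in Definition \ref{extcatalg}, so functoriality of $\Phi(V)$ is automatic and $\Phi$ extends to morphisms of modules in the obvious way.

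For $\Psi$, given $F \in Gr_{\mathcal{C}}(\mathfrak{A}, \mathfrak{M})\mbox{-Mod}$, I would set $\Psi(F) := \bigoplus_x F(x)$ and declare the element $rmf$ with $f : x \to y$ to act by $F(r, m, f)$ on the $x$-component and by zero elsewhere. To promote this into a bona fide left $\mathfrak{A} \ltimes \mathfrak{M}$-action one must argue $k$-bilinearity in $r$ and $m$; this is precisely what Lemma \ref{linear} supplies by extending $F$ uniquely to a $k$-linear functor out of $kGr_{\mathcal{C}}(\mathfrak{A}, \mathfrak{M})$, after which the object-finiteness of $\mathcal{C}$ allows the total Hom $k$-algebra to be identified with $\mathfrak{A} \ltimes \mathfrak{M}$ by matching the multiplication tables. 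The checks $\Psi \Phi \cong \mathrm{id}$ and $\Phi \Psi \cong \mathrm{id}$ then follow from the idempotent decomposition on one side and from $e_x \cdot \bigoplus_y F(y) = F(x)$ on the other.

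The main obstacle, as the remark following Definition \ref{bigrocon} already warns, is reconciling the deliberately non-additive Hom sets of $Gr_{\mathcal{C}}(\mathfrak{A}, \mathfrak{M})$ with the $k$-bilinearity in $(r, m)$ that any algebra action necessarily demands. Lemma \ref{linear} is doing the substantive work here: its universal property promotes a set-theoretic functor into a $k$-linear one on the linearization, and the finiteness of $\Ob \mathcal{C}$ is then indispensable in order to collect the individual Hom spaces into a single associative algebra with unit $\sum_x e_x$ whose multiplication matches Definition \ref{extcatalg}. Once this bookkeeping is done, both $\Phi$ and $\Psi$ are manifestly functorial on module morphisms, and the equivalence follows.
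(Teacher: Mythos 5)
Your proposal is correct, but it proves the theorem by a genuinely different route than the paper. The paper also begins with Lemma \ref{linear}, but then takes the finite direct sum of representables $\mathfrak{G}=\bigoplus_{x}\Hom_{kGr_{\mathcal{C}}(\mathfrak{A},\mathfrak{M})}(x,-)$, computes $\End(\mathfrak{G})\cong(\mathfrak{A}\ltimes\mathfrak{M})^{\op}$ via the Yoneda lemma, and concludes by citing Freyd's projective-generator theorem \cite[Exercise F on p.106]{Fre64}; the equivalence is thus abstract, with no explicit quasi-inverse pair. You instead run the classical ``algebra with a complete system of orthogonal idempotents versus category algebra'' argument: the unit decomposition $V=\bigoplus_{x}e_{x}V$ with $e_{x}=1_{\mathfrak{A}(x)}0_{\mathfrak{M}(x)}1_{x}$, the verification $e_{y}\ast rmf\ast e_{x}=rmf$ for $f\colon x\to y$ (which is correct, as is orthogonality $e_{y}\ast e_{x}=\delta_{xy}e_{x}$), and the explicit functors $\Phi$, $\Psi$. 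Your version is more elementary and self-contained, avoids the Morita-theoretic black box, and produces concrete quasi-inverses; the paper's version is shorter and concentrates all computation in the single algebra isomorphism $\bigoplus_{x,y}\Hom_{kGr_{\mathcal{C}}(\mathfrak{A},\mathfrak{M})}(x,y)\cong(\mathfrak{A}\ltimes\mathfrak{M})^{\op}$, which is essentially the same multiplication-table matching you perform when identifying the total Hom algebra. Both proofs use finiteness of $\Ob\mathcal{C}$ at the same two points: to make $\sum_{x}e_{x}$ an identity element, and to pass between the direct sum decomposition and the functor values (in the paper, to commute $\Hom$ with the finite (co)products).

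One conceptual point in your write-up needs correcting, though it does not break the proof. You say an algebra action ``necessarily demands $k$-bilinearity in $r$ and $m$'' and that Lemma \ref{linear} supplies it. It does not, and no such bilinearity is needed or available: in $\mathfrak{A}\ltimes\mathfrak{M}$ the symbols $rmf$ for \emph{distinct} triples $(r,m,f)$ are $k$-linearly independent basis elements --- this is exactly why the paper's map $(r,m,f)\mapsto rmf$, defined on the $k$-basis of $\bigoplus_{x,y}\Hom_{kGr_{\mathcal{C}}(\mathfrak{A},\mathfrak{M})}(x,y)$, can be a bijection --- so there is no relation such as $(r+r')mf=rmf+r'mf$ to verify, and indeed $F(r+r',m,f)=F(r,m,f)+F(r',m,f)$ is false for a general functor $F$ on $Gr_{\mathcal{C}}(\mathfrak{A},\mathfrak{M})$. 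What Lemma \ref{linear} actually provides is linearity over \emph{formal} $k$-combinations of morphisms, i.e.\ well-definedness of your $\Psi$ by linear extension from the basis $\{rmf\}$. Had the algebra been defined with $rmf$ additive in the parameters (as the trivial-extension analogy of Definition \ref{triext} might tempt one to assume), your $\Psi$ would fail to be well defined, and in fact the theorem itself would become false; so precision here matters. With that reading fixed, your constructions and the checks $\Psi\Phi=\mathrm{id}$ and $\Phi\Psi\cong\mathrm{id}$ all go through.
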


\begin{proof}
By Lemma \ref{linear}, we have
$$
Gr_{\mathcal{C}}(\mathfrak{A}, \mathfrak{M}) \mbox{\rm -Mod} = \mbox{\rm Fun}(Gr_{\mathcal{C}}(\mathfrak{A}, \mathfrak{M}), k\mbox{\rm -Mod}) \simeq \mbox{\rm Fun}_k(kGr_{\mathcal{C}}(\mathfrak{A}, \mathfrak{M}), k\mbox{\rm -Mod}),
$$  
where $kGr_{\mathcal{C}}(\mathfrak{A}, \mathfrak{M})$ is the $k$-linearization of $Gr_{\mathcal{C}}(\mathfrak{A}, \mathfrak{M})$, hence it is an additive category.
Let $\mathfrak{G}$ be the following functor
$$
\mathfrak{G}:=\bigoplus_{x \in \Ob kGr_{\mathcal{C}}(\mathfrak{A}, \mathfrak{M})}\Hom_{kGr_{\mathcal{C}}(\mathfrak{A}, \mathfrak{M})}(x, -).
$$
Since $\Ob \mathcal{C}$ is finite, it follows that $\Ob kGr_{\mathcal{C}}(\mathfrak{A}, \mathfrak{M})$ is finite too. 

Now, let us compute $\End(\mathfrak{G})$:

\begin{align*}
     & \End(\mathfrak{G}) \\
   = & \Hom\left(\bigoplus_{y \in \Ob kGr_{\mathcal{C}}(\mathfrak{A}, \mathfrak{M})}\Hom_{kGr_{\mathcal{C}}(\mathfrak{A}, \mathfrak{M})}(y, -), \bigoplus_{x \in \Ob kGr_{\mathcal{C}}(\mathfrak{A}, \mathfrak{M})}\Hom_{kGr_{\mathcal{C}}(\mathfrak{A}, \mathfrak{M})}(x, -)\right)\\
   \cong & \bigoplus_{x \in \Ob kGr_{\mathcal{C}}(\mathfrak{A}, \mathfrak{M})} \Hom\left(\bigoplus_{y \in \Ob kGr_{\mathcal{C}}(\mathfrak{A}, \mathfrak{M})}\Hom_{kGr_{\mathcal{C}}(\mathfrak{A}, \mathfrak{M})}(y, -), \Hom_{kGr_{\mathcal{C}}(\mathfrak{A}, \mathfrak{M})}(x, -)\right)  \\
    \cong & \bigoplus_{x \in \Ob kGr_{\mathcal{C}}(\mathfrak{A}, \mathfrak{M})} \bigoplus_{y \in \Ob kGr_{\mathcal{C}}(\mathfrak{A}, \mathfrak{M})} \Hom\left(\Hom_{kGr_{\mathcal{C}}(\mathfrak{A}, \mathfrak{M})}(y, -), \Hom_{kGr_{\mathcal{C}}(\mathfrak{A}, \mathfrak{M})}(x, -)\right) \\
     \cong & \bigoplus_{x \in \Ob kGr_{\mathcal{C}}(\mathfrak{A}, \mathfrak{M})} \bigoplus_{y \in \Ob kGr_{\mathcal{C}}(\mathfrak{A}, \mathfrak{M})} \Hom_{kGr_{\mathcal{C}}(\mathfrak{A}, \mathfrak{M})}(x, y) \\
     \cong & (\mathfrak{A} \ltimes \mathfrak{M}) ^{\op}.  
\end{align*}

The first and second isomorphisms hold since $\Ob kGr_{\mathcal{C}}(\mathfrak{A}, \mathfrak{M})$ is finite, and $\Hom$ functor preserves finite (co)products. The third isomorphism holds due to Yoneda Lemma. The last isomorphism is proved by considering the map $\Phi\colon  (r,m,f) \mapsto rmf.$ It is not hard to see that this map is a bijection. The map $\Phi$ is an isomorphism because it also preserves the multiplication: let
$t=\mathfrak{A}(g)(r)s$, $w=\mathfrak{A}(g)(r) \cdot n + \mathfrak{M}(g)(m) \cdot s$ and $h=fg$, then
\begin{align*}
     & \Phi((r,m,f) \circ (s,n,g)) & \\ 
    = & \Phi \left( (t,w,h)) \right) & (Definition~\ref{bigrocon}~(3)) \\
    = & twh & (Definition~of~\Phi) \\
    = & sng \ast rmf & (Definition~\ref{extcatalg}) \\
    = & \Phi((s,n,g)) \ast \Phi((r,m,f)) & (Definition~of~\Phi)  \\
    = & \Phi((r,m,f)) \ast^{\op} \Phi((s,n,g)). & \\
\end{align*}

Thus, by \cite[Exercise F on p.106]{Fre64}, there is a category equivalence
$$
Gr_{\mathcal{C}}(\mathfrak{A}, \mathfrak{M}) \mbox{\rm -Mod} \simeq \mathfrak{A} \ltimes \mathfrak{M} \mbox{\rm -Mod}.
$$ 
This completes the proof.
\end{proof}

With the theorem above, we reprove \cite[Theorem A]{WX23}. In some sense, the theorem above can be seen as a generalization of \cite[Theorem A]{WX23}. 

\begin{corollary} (see \cite[Theorem A]{WX23})
Let $\mathcal{C}$ be a small category and $\mathfrak{A} \colon \calC \to k\text{-}{\rm Alg}$ be a precosheaf of $k$-algebras on $\mathcal{C}$. If $\Ob \mathcal{C}$ is finite, then we have the following category equivalence  
$$
\mathfrak{A} \mbox{\rm -} \mathfrak{M}\mbox{\rm od} \simeq \mathfrak{A}[\mathcal{C}] \mbox{\rm -Mod},
$$
where $\mathfrak{A}[\mathcal{C}]$ is the skew category algebra with respect to $\mathfrak{A}$.
\end{corollary}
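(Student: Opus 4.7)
The plan is to deduce the corollary as the special case $\mathfrak{M} = \underline{0}$ of Theorem A, combined with a short unpacking of the modules on each side. First, by Example \ref{ex} (2), when $\mathfrak{M}$ is the zero bimodule the extension category algebra $\mathfrak{A} \ltimes \underline{0}$ is literally the skew category algebra $\mathfrak{A}[\mathcal{C}]$. Hence Theorem A, applied to the pair $(\mathfrak{A}, \underline{0})$ and using the hypothesis that $\Ob \mathcal{C}$ is finite, already delivers
$$
Gr_{\mathcal{C}}(\mathfrak{A}, \underline{0})\mbox{\rm -Mod} \;\simeq\; \mathfrak{A}[\mathcal{C}]\mbox{\rm -Mod}.
$$
It therefore remains to identify the left-hand side with $\mathfrak{A}\mbox{\rm -}\mathfrak{M}\mbox{\rm od}$.

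The key step is to match $Gr_{\mathcal{C}}(\mathfrak{A}, \underline{0})$ with the Grothendieck construction $Gr_{\mathcal{C}}(\mathfrak{A})$ from Definition \ref{algrocon}. Inspecting Definition \ref{bigrocon} when $\mathfrak{M}(y) = 0$ for every $y$, every triple $(r, m, f)$ has $m = 0_{\mathfrak{M}(y)}$, so morphisms are in natural bijection with pairs $(r, f)$; the composition rule then degenerates to $(r, f) \circ (s, g) = (\mathfrak{A}(g)(r)\, s,\, fg)$, which coincides verbatim with the composition in $Gr_{\mathcal{C}}(\mathfrak{A})$. Thus the two categories are isomorphic, and consequently so are their module categories. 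It then suffices to unpack the equivalence $Gr_{\mathcal{C}}(\mathfrak{A})\mbox{\rm -Mod} \simeq \mathfrak{A}\mbox{\rm -}\mathfrak{M}\mbox{\rm od}$: a $k$-linear functor $\mathfrak{F} \colon Gr_{\mathcal{C}}(\mathfrak{A}) \to k\mbox{\rm -Mod}$ assigns to each $x$ a $k$-module $\mathfrak{F}(x)$ whose left $\mathfrak{A}(x)$-action is read off from the morphisms $(r, 1_x)$ and whose transition maps along $f \colon x \to y$ are read off from $(1_{\mathfrak{A}(y)}, f)$; the composition $(1_{\mathfrak{A}(y)}, f) \circ (r, 1_x) = (\mathfrak{A}(f)(r), f)$ in $Gr_{\mathcal{C}}(\mathfrak{A})$ encodes exactly the compatibility between the pointwise $\mathfrak{A}(x)$-actions and the transition maps that defines a left $\mathfrak{A}$-module (the left-action analogue of Definition \ref{ramod}). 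Chaining the three equivalences yields the corollary.

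The part that requires the most care is the last identification $Gr_{\mathcal{C}}(\mathfrak{A})\mbox{\rm -Mod} \simeq \mathfrak{A}\mbox{\rm -}\mathfrak{M}\mbox{\rm od}$; if it has not already been recorded earlier in the paper (it is essentially the content of \cite{WX23} in covariant form, or a straightforward adaptation of Theorem A with trivial $\mathfrak{M}$), one has to write out the quasi-inverse functors in both directions and check functoriality and naturality of the unit and counit. Once this is in hand, no further computation is needed: Theorem A together with Example \ref{ex} (2) does all the work, so this corollary is genuinely a specialization rather than an independent argument.
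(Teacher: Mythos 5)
Your reduction follows the same skeleton as the paper's own proof: specialize Theorem \ref{main} at $\mathfrak{M}=\underline{0}$, use Example \ref{ex}(2) to get $\mathfrak{A}\ltimes\underline{0}=\mathfrak{A}[\mathcal{C}]$, and identify $Gr_{\mathcal{C}}(\mathfrak{A},\underline{0})$ with $Gr_{\mathcal{C}}(\mathfrak{A})$ (the paper asserts this identification without comment; you actually verify that the triples degenerate to pairs and the composition laws agree, which is fine). The divergence is in the remaining step $Gr_{\mathcal{C}}(\mathfrak{A})\mbox{\rm -Mod}\simeq \mathfrak{A}\mbox{\rm -}\mathfrak{M}\mbox{\rm od}$: the paper does not prove this by hand but passes through Lemma \ref{linear} to $kGr_{\mathcal{C}}(\mathfrak{A})\mbox{\rm -Mod}$ and then cites Howe \cite[Proposition 5]{How81}, with a remark that Howe's semidirect product is the \emph{linear} Grothendieck construction of $\mathfrak{A}$. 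You instead sketch a direct unpacking, and this is where your proposal has a genuine soft spot.

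The problem with the direct sketch: for a functor $\mathfrak{F}\colon Gr_{\mathcal{C}}(\mathfrak{A})\to k\mbox{\rm -Mod}$, functoriality makes the assignment $r\mapsto \mathfrak{F}\left((r,1_x)\right)$ multiplicative, but nothing forces it to be additive or $k$-linear in $r$. In $Gr_{\mathcal{C}}(\mathfrak{A})$ the morphism $(r+s,f)$ bears no categorical relation to $(r,f)$ and $(s,f)$, and even after the free linearization of Lemma \ref{linear} these remain distinct basis elements of $\Hom_{kGr_{\mathcal{C}}(\mathfrak{A})}(x,y)$. So ``reading off'' the action as you describe yields, a priori, only an action of the multiplicative monoid of $\mathfrak{A}(x)$ on $\mathfrak{F}(x)$, and the correspondence you propose is not an equivalence as stated: not every such functor comes from an $\mathfrak{A}$-module. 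The content that must be supplied is exactly the identification of hom-modules with $\bigoplus_{f\in\Hom_{\mathcal{C}}(x,y)}\mathfrak{A}(y)$ --- i.e., imposing the relations $(r,f)+(s,f)=(r+s,f)$ --- which is what Howe's semidirect product construction carries and what his Proposition 5 is stated for. You half-anticipate this (``one has to write out the quasi-inverse functors in both directions''), but the sketch you give would not close the gap, because the obstruction is not naturality bookkeeping; it is that the linearity-in-$r$ relations are absent from $Gr_{\mathcal{C}}(\mathfrak{A})$ and its free linearization. Replacing your last paragraph by the paper's citation chain (Lemma \ref{linear} followed by \cite[Proposition 5]{How81}, adapted from abelian-group-valued to $k$-module-valued functors) makes your argument coincide with the paper's.
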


\begin{proof}
Let $\mathfrak{M}=\underline{0}$, then $\mathfrak{A} \ltimes \mathfrak{M} = \mathfrak{A}[\mathcal{C}]$ (see Example \ref{ex} (2)) and $Gr_{\mathcal{C}}(\mathfrak{A}, \mathfrak{M}) = Gr_{\mathcal{C}}(\mathfrak{A})$. By Theorem \ref{main}, we have $Gr_{\mathcal{C}}(\mathfrak{A}) \mbox{\rm -Mod} \simeq \mathfrak{A}[\mathcal{C}] \mbox{\rm -Mod}$. Then, by Lemma \ref{linear}, we have $kGr_{\mathcal{C}}(\mathfrak{A}) \mbox{\rm -Mod} \simeq Gr_{\mathcal{C}}(\mathfrak{A}) \mbox{\rm -Mod} \simeq \mathfrak{A}[\mathcal{C}] \mbox{\rm -Mod}$. Furthermore, Howe (see \cite[Proposition 5]{How81}) showed that $\mathfrak{A} \mbox{\rm -} \mathfrak{M}\mbox{\rm od} \simeq kGr_{\mathcal{C}}(\mathfrak{A}) \mbox{\rm -Mod}$ (note that the semi-direct product construction therein is in fact the linear Grothendieck construction of $\mathfrak{A}$, we should also replace the abelian group-valued functor in \cite[Proposition 5]{How81} with the $k$-module-valued functor for applying to our situation). Therefore, we have $\mathfrak{A} \mbox{\rm -} \mathfrak{M}\mbox{\rm od} \simeq \mathfrak{A}[\mathcal{C}] \mbox{\rm -Mod}$. This completes the proof. 
\end{proof}

\section{Generalized Lyndon–Hochschild–Serre spectral sequences} \label{lhs}
For a right $\mathfrak{A}$-module $\mathfrak{N}$, two LHS-spectral sequences about $Gr_{\mathcal{C}}(\mathfrak{A},\mathfrak{N})$ will be established in this section.

\subsection{An extension of categories}
In this subsection, an extension of categories about $Gr_{\mathcal{C}}(\mathfrak{A},\mathfrak{N})$ will be built. Let us first recall the definition of Grothendieck construction of a precosheaf $\mathfrak{A} \colon \calC \to k\mbox{\rm -Alg}$ of unital $k$-algebras. 

\begin{definition} \cite[p.205]{WX23} \label{algrocon}
Let $\calC$ be a small category, $\mathfrak{A} \colon \calC \to k\mbox{\rm -Alg}$ be a precosheaf of unital $k$-algebras on $\calC$. Then a category $Gr_{\mathcal{C}}(\mathfrak{A})$, called \emph{the Grothendieck construction of $\mathfrak{A}$}, is defined as follows:
\begin{enumerate}
    \item $\Ob Gr_{\mathcal{C}}(\mathfrak{A}):= \Ob \calC$;
    \item for each $x, y \in \Ob Gr_{\mathcal{C}}(\mathfrak{A})$,
    $$
    \Hom_{Gr_{\mathcal{C}}(\mathfrak{A})}(x, y) := \left\{ (r,f) ~|~ f \in \Hom_{\mathcal{C}}(x,y), r \in \mathfrak{A}(y) \right\}
    $$
    \item for each $x, y, z \in \Ob Gr_{\mathcal{C}}(\mathfrak{A})$, and each $(r,f) \in \Hom_{Gr_{\mathcal{C}}(\mathfrak{A})}(x, y)$, $(s,g) \in \Hom_{Gr_{\mathcal{C}}(\mathfrak{A})}(y, z)$, we set
    $$
    (r,f) \circ (s,g) : = \left( \mathfrak{A}(g)(r)s, fg \right).
    $$
\end{enumerate}
\end{definition}

Let $\mathfrak{M}_{\sqcup}:=\bigsqcup_{x \in \Ob \calC} \mathfrak{M}(x)$ be a disjoint union of the underlying abelian groups of $k$-modules $\mathfrak{M}(x)$, viewing it as a category, then there are two natural functors 
$$
\mathfrak{M}_{\sqcup} \overset{\iota}{\longrightarrow} Gr_{\mathcal{C}}(\mathfrak{A}, \mathfrak{M}) \overset{\pi}{\longrightarrow} Gr_{\mathcal{C}}(\mathfrak{A}),
$$ 
where the functor 
$
\iota \colon \mathfrak{M}_{\sqcup} \longrightarrow Gr_{\mathcal{C}}(\mathfrak{A}, \mathfrak{M})
$
is defined as:
$
x \overset{m}{\longrightarrow} x \mapsto  x \overset{(1_{\mathfrak{A}(x)}, m, 1_x)}{\longrightarrow} x,
$
and the functor
$
\pi \colon Gr_{\mathcal{C}}(\mathfrak{A}, \mathfrak{M}) \longrightarrow Gr_{\mathcal{C}}(\mathfrak{A})
$
is defined as:
$
x \overset{(r,m,f)}{\longrightarrow} y \mapsto x \overset{(r,f)}{\longrightarrow} y.
$
One can check that these two functors may not form an extension of categories, since the condition (2) of Definition \ref{extension} may fail in general. In order to fix this problem, we need the following definition.

\begin{definition} \label{rgrocon}
 Let $\calC$ be a small category, $\mathfrak{A} \colon \calC \to k\mbox{\rm -Alg}$ be a precosheaf of unital $k$-algebras on $\calC$, and $\mathfrak{N} \colon \calC \to k\mbox{\rm -Mod}$ be a right $\mathfrak{A}$-module. Then a category $Gr_{\mathcal{C}}(\mathfrak{A}, \mathfrak{N})$, called \emph{the Grothendieck construction of $\mathfrak{A}$ and $\mathfrak{N}$}, is defined as follows:
\begin{enumerate}
    \item $\Ob Gr_{\mathcal{C}}(\mathfrak{A}, \mathfrak{N}):= \Ob \calC$;
    \item for each $x, y \in \Ob Gr_{\mathcal{C}}(\mathfrak{A}, \mathfrak{N})$,
    $$
    \Hom_{Gr_{\mathcal{C}}(\mathfrak{A}, \mathfrak{N})}(x, y) := \left\{ (r,m,f) ~|~ f \in \Hom_{\mathcal{C}}(x,y), r \in \mathfrak{A}(y), m \in \mathfrak{N}(y) \right\}
    $$
    \item for each $x, y, z \in \Ob Gr_{\mathcal{C}}(\mathfrak{A}, \mathfrak{N})$, and each $(r,m,f) \in \Hom_{Gr_{\mathcal{C}}(\mathfrak{A}, \mathfrak{N})}(x, y)$, $(s,n,g) \in \Hom_{Gr_{\mathcal{C}}(\mathfrak{A}, \mathfrak{N})}(y, z)$, we set
    $$
    (r,m,f) \circ (s,n,g) : = \left( \mathfrak{A}(g)(r)s, n + \mathfrak{N}(g)(m) \cdot s, fg \right).
    $$
\end{enumerate}
\end{definition}

\begin{remark}
The main difference between Definition \ref{rgrocon} and Definition \ref{bigrocon} is the composition law (3). In item (3) of Definition \ref{rgrocon} above, the left action is not allowed since $\mathfrak{N}$ is only a \emph{right} $\mathfrak{A}$-module.  
\end{remark}

When replacing $\mathfrak{M}$ by $\mathfrak{N}$, then an extension of categories will be obtained.

\begin{lemma} \label{exofcat}
Let $\mathfrak{N}_{\sqcup}:=\bigsqcup_{x \in \Ob \calC} \mathfrak{N}(x)$,
Then the following sequence of functors
$$
\mathfrak{N}_{\sqcup} \overset{\iota}{\longrightarrow} Gr_{\mathcal{C}}(\mathfrak{A}, \mathfrak{N}) \overset{\pi}{\longrightarrow} Gr_{\mathcal{C}}(\mathfrak{A})
$$ 
is an extension of categories, where the functor 
$
\iota \colon \mathfrak{N}_{\sqcup} \longrightarrow Gr_{\mathcal{C}}(\mathfrak{A}, \mathfrak{N})
$
is defined as:
$$
x \overset{m}{\longrightarrow} x \mapsto  x \overset{(1_{\mathfrak{A}(x)}, m, 1_x)}{\longrightarrow} x,
$$
and the functor
$
\pi \colon Gr_{\mathcal{C}}(\mathfrak{A}, \mathfrak{N}) \longrightarrow Gr_{\mathcal{C}}(\mathfrak{A})
$
is defined as:
$$
x \overset{(r,m,f)}{\longrightarrow} y \mapsto x \overset{(r,f)}{\longrightarrow} y.
$$
\end{lemma}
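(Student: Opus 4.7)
The plan is to verify, directly from the composition laws of Definitions \ref{rgrocon} and \ref{algrocon}, the two conditions of Definition \ref{extension}. To begin, I would note that all three categories share the object set $\Ob \calC$ and that $\iota$ and $\pi$ are the identity on objects by construction. Before attacking the axioms, I would confirm that $\iota$ and $\pi$ are functors: for $\pi$ this is immediate, since the composition of Definition \ref{rgrocon} projects componentwise onto that of Definition \ref{algrocon}; for $\iota$, morphisms in $\mathfrak{N}_{\sqcup}$ at $x$ are elements of the abelian group $\mathfrak{N}(x)$ with addition as composition, and specializing the formula of Definition \ref{rgrocon} to $f = g = 1_x$, $r = s = 1_{\mathfrak{A}(x)}$, invoking functoriality of $\mathfrak{A}$ and $\mathfrak{N}$ at identities together with the right-module identity axiom, yields $\iota(m) \circ \iota(m') = \iota(m + m')$.

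Condition (1) of Definition \ref{extension} is then straightforward: injectivity of $\iota$ on morphisms follows because the middle slot of $(1_{\mathfrak{A}(x)}, m, 1_x)$ determines $m$, and surjectivity of $\pi$ on morphisms is witnessed by the obvious lift $(r, 0_{\mathfrak{N}(y)}, f)$ of any $(r, f) \colon x \to y$ in $Gr_{\calC}(\mathfrak{A})$.

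The heart of the argument is condition (2). Given $(r, m, f), (s, n, g) \in \Hom_{Gr_{\calC}(\mathfrak{A}, \mathfrak{N})}(x, y)$, the equality $\pi(r, m, f) = \pi(s, n, g)$ is equivalent to $r = s$ together with $f = g$. Assuming this, I would exhibit the unique $h \in \Mor \mathfrak{N}_{\sqcup}$ satisfying $(r, m, f) \circ \iota(h) = (s, n, g)$. Because $\iota(h)$ must be composable after $(r, m, f)$, necessarily $h \in \mathfrak{N}(y)$ and $\iota(h) = (1_{\mathfrak{A}(y)}, h, 1_y)$. Expanding this composition via Definition \ref{rgrocon} and simplifying using $\mathfrak{A}(1_y) = \Id$, $\mathfrak{N}(1_y) = \Id$, and $m \cdot 1_{\mathfrak{A}(y)} = m$, the result collapses to $(r, h + m, f)$. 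Matching componentwise with $(r, n, f)$ forces the unique solution $h = n - m \in \mathfrak{N}(y)$. The converse implication — that existence of such an $h$ forces $\pi(r, m, f) = \pi(s, n, g)$ — falls out of the same formula by matching first and third components.

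I do not anticipate a genuine technical obstacle; once the composition formulas are unpacked, the verification is routine. The conceptual point worth emphasizing, and the reason the analogous sequence built from the bimodule Grothendieck construction $Gr_{\calC}(\mathfrak{A}, \mathfrak{M})$ fails in general (as signalled in the remark preceding the lemma), is that with a mere right action the $h$-contribution to the middle component is a pure translation by $h$, which is always uniquely invertible in the abelian group $\mathfrak{N}(y)$; a genuine left action would instead contribute a term of the form $r \cdot h$, which need be neither injective nor surjective as a function of $h$, destroying both existence and uniqueness in condition (2).
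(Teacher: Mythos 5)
Your proof is correct and takes essentially the same route as the paper's: a direct verification of conditions (1) and (2) of Definition \ref{extension}, with the identical key computation $(r,m,f)\circ\iota(h)=(r,\,h+m,\,f)$ forcing the unique solution $h=n-m$ in the abelian group $\mathfrak{N}(y)$ (the paper's $m_3=m_2-m_1$). Your extra checks --- functoriality of $\iota$, the explicit lift $(r,0_{\mathfrak{N}(y)},f)$ witnessing surjectivity of $\pi$, and the observation that a left-action term $r\cdot h$ is what breaks condition (2) in the bimodule case --- are more thorough than the paper's terse proof but do not constitute a different approach.
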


\begin{proof}
It can be proved by directly checking the conditions (1) and (2) of Definition \ref{extension}. By the definitions of $\iota$ and $\pi$, one can easily see that condition (1) is satisfied. It remains to check the condition (2). 

For two morphisms $(r,m_1,f), (r,m_2,f) \in \Hom_{ Gr_{\mathcal{C}}(\mathfrak{A}, \mathfrak{N})}(x, y)$, by computation, it is true that, $\pi \left((r,m_1,f)\right) = (r, f) = \pi \left((r,m_2,f)\right)$ if and only if there is a unique $m_3:= (m_2-m_1) \in \mathfrak{N}(y) $ such that 
\begin{align*}
     & (r,m_1,f) \circ \iota(m_3) & \\ 
    = & (r,m_1,f) \circ (1_{\mathfrak{A}(y)},m_3,1_y) & (Definition~\iota) \\
    = & (r,m_1,f) \circ (1_{\mathfrak{A}(y)},m_2-m_1,1_y) &  \\
    = & (\mathfrak{A}(1_y)(r)1_{\mathfrak{A}(y)}, (m_2-m_1)+\mathfrak{N}(1_y)(m_1) \cdot 1_{\mathfrak{A}(y)}, f1_y) & (Definition~\ref{rgrocon}~(3)) \\
    = & (r,m_2,f). & \\
\end{align*}
The uniqueness of $m_3$ is follows from the fact that the inverse elements are unique in an abelian group.
\end{proof}

\subsection{Spectral sequences}
In this subsection, two LHS-spectral sequences will be established. For more information about spectral sequences, one can see \cite{Xu08,Yal24} and McCleary's book \cite{Mcc01}. In order to obtain the spectral sequences, we need the following definition.

\begin{definition} \label{hfun}
The assignment $x \to \mathbb{H}^*(\mathfrak{N}(x); \mathfrak{F}(x))$ together with the induced homomorphisms $\mathbb{H}^*(\mathfrak{N}(y); \mathfrak{F}(y)) \to \mathbb{H}^*(\mathfrak{N}(x); \mathfrak{F}(x))$  defines \emph{a right $Gr_{\mathcal{C}}(\mathfrak{A})$-module}. We denote it by $\mathbb{H}^*(\mathfrak{N}_{\sqcup}; \mathfrak{F})$.    
\end{definition}

\begin{remark}
The existence of the induced morphisms in the definition above is referred to \cite[Lemma 7.7]{Yal24}.    
\end{remark}

With the extension of categories in Lemma \ref{exofcat}, we get our first LHS-spectral sequence.

\begin{theorem} \label{lhs1}
Let $\mathfrak{F} \in \mbox{\rm Mod-} Gr_{\mathcal{C}}(\mathfrak{A}, \mathfrak{N})$ and $\mathfrak{G} \in \mbox{\rm Mod-} Gr_{\mathcal{C}}(\mathfrak{A})$, then there is a spectral sequence
$$
\E_2^{p,q}=\Ext^p_{Gr_{\mathcal{C}}(\mathfrak{A})}(\mathfrak{G}, \mathbb{H}^q(\mathfrak{N}_{\sqcup}; \mathfrak{F})) \Longrightarrow \Ext^{p+q}_{Gr_{\mathcal{C}}(\mathfrak{A}, \mathfrak{N})}(\Res_{\pi} \mathfrak{G}, \mathfrak{F}).
$$
\end{theorem}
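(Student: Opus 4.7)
The plan is to apply the Lyndon--Hochschild--Serre spectral sequence for extensions of small categories, developed in Xu \cite{Xu08} and Yal\c{c}\i n \cite{Yal24}, to the extension
\[
\mathfrak{N}_{\sqcup} \overset{\iota}{\longrightarrow} Gr_{\mathcal{C}}(\mathfrak{A}, \mathfrak{N}) \overset{\pi}{\longrightarrow} Gr_{\mathcal{C}}(\mathfrak{A})
\]
provided by Lemma \ref{exofcat}. Structurally, this is a Grothendieck spectral sequence for a composition of functors, so I would first set up the required adjunction and then identify the derived functors of $\pi_{*}$ as fiberwise cohomology.

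First, I would observe that the restriction functor $\Res_{\pi} \colon \mbox{\rm Mod-} Gr_{\mathcal{C}}(\mathfrak{A}) \to \mbox{\rm Mod-} Gr_{\mathcal{C}}(\mathfrak{A}, \mathfrak{N})$ is exact (being defined objectwise in a functor category), and so its right adjoint $\pi_{*}$, given by the right Kan extension along $\pi$, preserves injectives. Combining the adjunction isomorphism
\[
\Hom_{Gr_{\mathcal{C}}(\mathfrak{A}, \mathfrak{N})}(\Res_{\pi} \mathfrak{G},\, -) \;\cong\; \Hom_{Gr_{\mathcal{C}}(\mathfrak{A})}(\mathfrak{G},\, -) \circ \pi_{*}
\]
with the Grothendieck spectral sequence yields a converging spectral sequence
\[
E_2^{p,q} \;=\; \Ext^p_{Gr_{\mathcal{C}}(\mathfrak{A})}\bigl(\mathfrak{G},\, R^q\pi_{*} \mathfrak{F}\bigr) \;\Longrightarrow\; \Ext^{p+q}_{Gr_{\mathcal{C}}(\mathfrak{A}, \mathfrak{N})}(\Res_{\pi} \mathfrak{G},\, \mathfrak{F}).
\]

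Second, I would identify $R^q\pi_{*} \mathfrak{F}$ with the right $Gr_{\mathcal{C}}(\mathfrak{A})$-module $\mathbb{H}^q(\mathfrak{N}_{\sqcup}; \mathfrak{F})$ of Definition \ref{hfun}. At an object $x$, the value $R^q\pi_{*}\mathfrak{F}(x)$ is computed via the comma category of $\pi$ over $x$; by condition (2) of Definition \ref{extension} this fiber is precisely the category $\mathfrak{N}(x)$, acting on $\mathfrak{F}(x)$, so the value is $\mathbb{H}^q(\mathfrak{N}(x); \mathfrak{F}(x))$. To check the functoriality in $x$, a morphism $(s,g) \in \Hom_{Gr_{\mathcal{C}}(\mathfrak{A})}(x,y)$ is lifted to any $(s,n,g)$ in $Gr_{\mathcal{C}}(\mathfrak{A}, \mathfrak{N})$; any two such lifts differ by a unique element of $\mathfrak{N}(y)$ by the extension property, and the resulting ambiguity acts trivially on cohomology (inner automorphisms act trivially), so the induced map is well defined and matches the one from \cite[Lemma 7.7]{Yal24} recalled in Definition \ref{hfun}.

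The main obstacle is verifying that the Grothendieck-construction-type extension in Lemma \ref{exofcat} meets the precise hypotheses required by the LHS formalism of \cite{Yal24} (or \cite{Xu08}), so that their fiberwise identification of $R^q\pi_{*}$ applies here verbatim. Once this is secured, substituting the identification into the Grothendieck spectral sequence above yields Theorem \ref{lhs1}, and setting $\mathfrak{G} = \underline{k}$ together with Definition \ref{cohdef} yields Theorem \ref{lhs2} as an immediate corollary.
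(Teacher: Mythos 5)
Your proposal is correct and follows essentially the same route as the paper: the paper's entire proof consists of applying \cite[Theorem 7.10]{Yal24} to the extension of categories established in Lemma \ref{exofcat}, which is precisely your plan. The Grothendieck-spectral-sequence mechanism you sketch (exactness of $\Res_{\pi}$, the adjunction with $\pi_{*}$, and the fiberwise identification $R^{q}\pi_{*}\mathfrak{F} \cong \mathbb{H}^{q}(\mathfrak{N}_{\sqcup}; \mathfrak{F})$) is just the internal content of the cited theorem, and the hypothesis you flag as the ``main obstacle'' is exactly what Lemma \ref{exofcat} already verifies, namely that the sequence is an extension in the sense of Definition \ref{extension}.
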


\begin{proof}
Apply \cite[Theorem 7.10]{Yal24} to the extension of categories in Lemma \ref{exofcat}.
\end{proof}

As an application of the above theorem, we obtain our second LHS-spectral sequence.

\begin{theorem} \label{lhs2}
Let $\mathfrak{F} \in \mbox{\rm Mod-} Gr_{\mathcal{C}}(\mathfrak{A}, \mathfrak{N})$, then there is a spectral sequence 
$$
\E_2^{p,q}=\H^p(Gr_{\mathcal{C}}(\mathfrak{A}); \mathbb{H}^q(\mathfrak{N}_{\sqcup}; \mathfrak{F})) \Longrightarrow \H^{p+q}(Gr_{\mathcal{C}}(\mathfrak{A}, \mathfrak{N}); \mathfrak{F}).
$$    
\end{theorem}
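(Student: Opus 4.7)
The plan is to derive Theorem \ref{lhs2} as a direct specialization of Theorem \ref{lhs1}. Recall that by Definition \ref{cohdef}, cohomology of a category with coefficients in a module is simply the $\Ext$-group from the constant functor $\underline{k}$. So I would choose $\mathfrak{G} := \underline{k}$, viewed as a right $Gr_{\mathcal{C}}(\mathfrak{A})$-module, and feed this choice into Theorem \ref{lhs1}.

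With this substitution, the $\E_2$-page of the spectral sequence from Theorem \ref{lhs1} becomes
$$
\E_2^{p,q} = \Ext^p_{Gr_{\mathcal{C}}(\mathfrak{A})}(\underline{k}, \mathbb{H}^q(\mathfrak{N}_{\sqcup}; \mathfrak{F})) = \H^p(Gr_{\mathcal{C}}(\mathfrak{A}); \mathbb{H}^q(\mathfrak{N}_{\sqcup}; \mathfrak{F})),
$$
which is the desired $\E_2$-term. The abutment then reads $\Ext^{p+q}_{Gr_{\mathcal{C}}(\mathfrak{A}, \mathfrak{N})}(\Res_{\pi} \underline{k}, \mathfrak{F})$, so the remaining task is to identify $\Res_{\pi} \underline{k}$ with the constant functor $\underline{k}$ on $Gr_{\mathcal{C}}(\mathfrak{A}, \mathfrak{N})$.

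The key observation is that the functor $\pi \colon Gr_{\mathcal{C}}(\mathfrak{A}, \mathfrak{N}) \to Gr_{\mathcal{C}}(\mathfrak{A})$ from Lemma \ref{exofcat} is the identity on objects, and the constant functor $\underline{k}$ takes every object to $k$ and every morphism to $\mathrm{id}_k$. Consequently $(\Res_{\pi} \underline{k})(x) = \underline{k}(\pi(x)) = k$ for each $x$, and $(\Res_{\pi} \underline{k})(r,m,f) = \underline{k}(r,f) = \mathrm{id}_k$, so $\Res_{\pi} \underline{k}$ agrees with the constant functor $\underline{k}$ on $Gr_{\mathcal{C}}(\mathfrak{A}, \mathfrak{N})$. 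Applying Definition \ref{cohdef} once more gives
$$
\Ext^{p+q}_{Gr_{\mathcal{C}}(\mathfrak{A}, \mathfrak{N})}(\Res_{\pi} \underline{k}, \mathfrak{F}) = \Ext^{p+q}_{Gr_{\mathcal{C}}(\mathfrak{A}, \mathfrak{N})}(\underline{k}, \mathfrak{F}) = \H^{p+q}(Gr_{\mathcal{C}}(\mathfrak{A}, \mathfrak{N}); \mathfrak{F}),
$$
which completes the identification of the abutment.

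There is no real obstacle here: the proof amounts to a bookkeeping exercise that combines Theorem \ref{lhs1} with the definition of categorical cohomology. The only point to state with care is the compatibility $\Res_{\pi} \underline{k} = \underline{k}$, which ultimately rests on the fact that $\pi$ is bijective on objects (part of being an extension of categories in the sense of Definition \ref{extension}) and sends identities to identities. Thus the entire argument reduces to a one-line invocation of Theorem \ref{lhs1} with $\mathfrak{G} = \underline{k}$, followed by the two cohomological identifications on the $\E_2$-page and on the abutment.
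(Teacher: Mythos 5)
Your proposal is correct and follows exactly the paper's route: the paper's proof is the one-line specialization $\mathfrak{G} := \underline{k}$ in Theorem \ref{lhs1}. Your additional verification that $\Res_{\pi} \underline{k}$ is the constant functor $\underline{k}$ on $Gr_{\mathcal{C}}(\mathfrak{A}, \mathfrak{N})$ (using that $\pi$ is the identity on objects) is left implicit in the paper and is a worthwhile detail to make explicit.
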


\begin{proof}
Let $\mathfrak{G}:=\underline{k}$, then it follows from Theorem \ref{lhs1}.    
\end{proof}


\bibliographystyle{plain}
\bibliography{ref}

\begin{thebibliography}{10}

\bibitem{ABFS17}
D.~D. Anderson, Driss Bennis, Brahim Fahid, and Abdulaziz Shaiea.
\newblock On $n$-trivial extensions of rings.
\newblock {\em The Rocky Mountain Journal of Mathematics}, 47(8):2439--2511, 2017.

\bibitem{AHR84}
Ibrahim Assem, Dieter Happel, and Oscar Rold{\'a}n.
\newblock Representation-finite trivial extension algebras.
\newblock {\em Journal of Pure and Applied Algebra}, 33(3):235--242, 1984.

\bibitem{AI86}
Ibrahim Assem and Yasuo Iwanaga.
\newblock Stable equivalence of representation-finite trivial extension algebras.
\newblock {\em Journal of Algebra}, 102(1):33--38, 1986.

\bibitem{ARS95}
Maurice Auslander, Idun Reiten, and Sverre~O. Smal{\o}.
\newblock {\em Representation theory of artin algebras}.
\newblock Cambridge University Press, 1995.

\bibitem{BW84}
Hans-Joachim Baues and G{\"u}nther Wirsching.
\newblock Cohomology of small categories.
\newblock {\em Journal of Pure and Applied Algebra}, 38:187--211, 1985.

\bibitem{Bav17}
Vladimir~V. Bavula.
\newblock Quiver generalized weyl algebras, skew category algebras and diskew polynomial rings.
\newblock {\em Mathematics in Computer Science}, 11(3-4):253--268, 2017.

\bibitem{Bav20}
Vladimir~V. Bavula.
\newblock Skew category algebras.
\newblock {\em Mathematics in Computer Science}, 14(2):339--346, 2020.

\bibitem{BBG20}
Dirar Benkhadra, Driss Bennis, and J.~R. Garcia~Rozas.
\newblock The category of modules on an $n$-trivial extension: the basic properties.
\newblock In {\em Algebra Colloquium}, volume~27, pages 607--620. World Scientific, 2020.

\bibitem{CMT24}
Tiberiu Coconet, Virgilius-Aurelian Minuta, and Constantin-Cosmin Todea.
\newblock Skew category algebras, twisted tensor product algebras and induction of precosheaves.
\newblock {\em arXiv preprint arXiv:2402.02605}, 2024.

\bibitem{FGR06}
Robert~M. Fossum, Phillip~A. Griffith, and Idun Reiten.
\newblock {\em Trivial Extensions of Abelian Categories: Homological Algebra of Trivial Extensions of Abelian Catergories with Applications to Ring Theory}, volume 456.
\newblock Springer, 2006.

\bibitem{Fre64}
Peter~J. Freyd.
\newblock {\em Abelian categories}.
\newblock Harper \& Row New York, 1964.

\bibitem{Hof94}
Georges Hoff.
\newblock Cohomologies et extensions de categories.
\newblock {\em Mathematica Scandinavica}, pages 191--207, 1994.

\bibitem{How81}
Douglas Howe.
\newblock Module categories over topoi.
\newblock {\em Journal of Pure and Applied Algebra}, 21(2):161--165, 1981.

\bibitem{KY25}
Mehmet Kirtisoglu and Ergun Yalcin.
\newblock Thomason cohomology and quillen's theorem a.
\newblock {\em arXiv preprint arXiv:2503.14659}, 2025.

\bibitem{Lof76}
Clas L{\"o}fwall.
\newblock The global homological dimensions of trivial extensions of rings.
\newblock {\em Journal of Algebra}, 39(1):287--307, 1976.

\bibitem{Mcc01}
John McCleary.
\newblock {\em A user's guide to spectral sequences}.
\newblock Cambridge University Press, 2001.

\bibitem{MP00}
Sandra Michelena and Mar{\i}a~In{\'e}s Platzeck.
\newblock Hochschild cohomology of triangular matrix algebras.
\newblock {\em Journal of Algebra}, 233(2):502--525, 2000.

\bibitem{MY20}
Hiroyuki Minamoto and Kota Yamaura.
\newblock Homological dimension formulas for trivial extension algebras.
\newblock {\em Journal of Pure and Applied Algebra}, 224(8):106344, 2020.

\bibitem{Mit72}
Barry Mitchell.
\newblock Rings with several objects.
\newblock {\em Advances in Mathematics}, 8(1):1--161, 1972.

\bibitem{PR73}
Ingegerd Palm{\'e}r and Jan-Erik Roos.
\newblock Explicit formulae for the global homological dimensions of trivial extensions of rings.
\newblock {\em Journal of Algebra}, 27(2):380--413, 1973.

\bibitem{PJ06}
Teimuraz Pirashvili and Mar{\'\i}~a Julia~Redondo.
\newblock Cohomology of the grothendieck construction.
\newblock {\em manuscripta mathematica}, 120(2):151--162, 2006.

\bibitem{Pog05}
Zygmunt Pogorza{\l}y.
\newblock A generalization of trivial extension algebras.
\newblock {\em Journal of Pure and Applied Algebra}, 203(1-3):145--165, 2005.

\bibitem{Web07}
Peter Webb.
\newblock An introduction to the representations and cohomology of categories.
\newblock {\em Group representation theory}, pages 149--173, 2007.

\bibitem{Wu24}
Mawei Wu.
\newblock Pseudoskew category algebras and modules over representations of small categories.
\newblock {\em arXiv preprint arXiv:2406.19883}, 2024.

\bibitem{WX23}
Mawei Wu and Fei Xu.
\newblock Skew category algebras and modules on ringed finite sites.
\newblock {\em Journal of Algebra}, 631:194--217, 2023.

\bibitem{Xu08}
Fei Xu.
\newblock On the cohomology rings of small categories.
\newblock {\em Journal of Pure and Applied Algebra}, 212(11):2555--2569, 2008.

\bibitem{Yal24}
Erg{\"u}n Yal{\c{c}}{\i}n.
\newblock Lhs-spectral sequences for regular extensions of categories.
\newblock {\em Journal of Homotopy and Related Structures}, 19(1):1--51, 2024.

\end{thebibliography}
\end{document}